\documentclass[11pt]{amsart}
\pdfoutput=1
\usepackage[usenames,dvipsnames]{xcolor}
\usepackage[utf8]{inputenc}
\usepackage[T1]{fontenc}
\usepackage[letterpaper,margin=1in]{geometry}
\usepackage{lmodern}
\usepackage{microtype}
\usepackage{etoolbox}
\usepackage{mathtools}
\usepackage{amssymb}
\usepackage{enumitem}
\usepackage[round]{natbib}
\usepackage{hyperref}

\numberwithin{equation}{section}
\patchcmd{\section}{\scshape}{\fontfamily{ppl}\bfseries\scshape\selectfont}{}{}
\hypersetup{colorlinks=true,linkcolor=blue!45!black,citecolor=blue!45!black,urlcolor=red!65!black}
\setlist{nosep,leftmargin=*}

\newtheorem{theorem}{Theorem}[section]
\newtheorem{prop}[theorem]{Proposition}
\newtheorem{lemma}[theorem]{Lemma}
\newtheorem{rmk}[theorem]{Remark}
\theoremstyle{definition}
\newtheorem{defn}[theorem]{Definition}

\newcommand{\reals}{\mathbb{R}}
\newcommand{\E}{\mathbb{E}}
\newcommand{\nlsum}{\sum\nolimits}
\DeclareMathOperator{\diag}{diag}
\DeclareMathOperator{\per}{per}

\DeclareMathAlphabet{\mathbbold}{U}{bbold}{m}{n}
\newcommand{\ii}{\mathbbold{i}}   %

\DeclareMathOperator{\Cov}{Cov}
\DeclareMathOperator{\Var}{Var}

\newcommand{\one}{\mathbf 1}

\newcommand{\cE}{\mathcal E}
\newcommand{\cZ}{\mathcal Z}
\newcommand{\T}{\mathbb T}
\newcommand{\dotast}{\mathbin{\dot\ast}}

\hypersetup{
  pdftitle={A Koteljanskii inequality for permanents},
  pdfauthor={Suvrit Sra}
}

\setlist[itemize]{topsep=4pt,itemsep=2pt,parsep=2pt}
\setlist[enumerate]{topsep=4pt,itemsep=2pt,parsep=2pt}

\begin{document}
\title{A Koteljanskii inequality for permanents}
\author[Suvrit Sra]{Suvrit Sra\textsuperscript{\normalfont*}}
\thanks{*\,TU Munich, Dept.~of Mathematics, School of CIT; Garching, Germany.
  Email: \texttt{s.sra@tum.de}.}
\begin{abstract}
We prove a permanental analogue of Koteljanskii's inequality. If \(A\) is an inverse \(M\)-matrix that becomes symmetric after a positive diagonal similarity, then \(\per(A_{S\cup T})\per(A_{S\cap T})\ge\per(A_S)\per(A_T)\) for all \(S,T\subseteq[n]\), where \(A_S\) is the principal submatrix indexed by \(S\). The proof expresses permanents as moments of a complex Gaussian vector and uses Ginibre's correlation inequality. Finally, an explicit counterexample shows that symmetry cannot be dropped. %
\end{abstract}
\maketitle

\vspace*{-5pt}
\section{Introduction}\label{sec:intro}
For an \(n\times n\) matrix \(A\) and \(S\subseteq[n]\), write \(A_S=A[S,S]\) for the principal submatrix with row and column set \(S\), and set \(\det(A_\varnothing)=\per(A_\varnothing)=1\). Koteljanskii's inequality \citep{koteljanskii1950theory,koteljanskii1953property}
\[
  \det(A_{S\cup T})\det(A_{S\cap T})\le\det(A_S)\det(A_T),
  \qquad S,T\subseteq[n],
\]
holds for positive semidefinite matrices, and also for \(M\)-matrices and inverse \(M\)-matrices \citep{fallat1998characterization}. Its case \(S\cap T=\varnothing\) is Fischer's inequality, and iterating Fischer's inequality gives Hadamard's inequality \(\det(A)\le\prod_ia_{ii}\). For positive semidefinite matrices the permanent reverses Hadamard's inequality, \(\per(A)\ge\prod_ia_{ii}\) \citep{marcus1963permanent}, and Lieb's permanental inequality \citep{lieb1966} reverses Fischer's: \(\per(A_{S\cup T})\ge\per(A_S)\per(A_T)\) for disjoint \(S,T\). These two reversals suggest the permanental analogue of Koteljanskii's inequality,
\begin{equation}\label{eq:lsm}
  \per(A_{S\cup T})\per(A_{S\cap T})\ge \per(A_S)\per(A_T),
  \qquad S,T\subseteq[n],
\end{equation}
where as usual the permanent of an $n\times n$ matrix is defined as
\begin{equation}
  \label{eq:1}
  \per(A)=\sum_{\pi\in \mathrm{Sym}_n}\prod\nolimits_{i\in [n]}A_{i,\pi(i)}.
\end{equation}

\vskip4pt
\noindent\emph{Which matrices satisfy~\eqref{eq:lsm}}? %
Contrary to the determinant, here positive semidefiniteness does not suffice, even with nonnegative entries: the positive semidefinite matrix \(A\) with rows \((1,1,0)\), \((1,2,1)\), \((0,1,1)\) has \(\per(A)\per(A_{\{2\}})=8<9=\per(A_{\{1,2\}})\per(A_{\{2,3\}})\). This obstruction led us to consider inverse \(M\)-matrices instead, as they also satisfy Koteljanskii's inequality. Numerous initial experiments in early 2015 convinced us that for inverse $M$-matrices the inequality~\eqref{eq:lsm} could hold. %

\vskip5pt
A matrix \(A\) is an inverse \(M\)-matrix if \(A\ge0\) entrywise, \(A\) is invertible, and \(A^{-1}\) has nonpositive off-diagonal entries. We prove~\eqref{eq:lsm} for inverse \(M\)-matrices that are symmetric after a positive diagonal similarity, which we call \emph{symmetrizable}, and show that~\eqref{eq:lsm} can fail without this hypothesis.

\begin{theorem}[Koteljanskii for $\per$]\label{thm:main}
Let \(A\) be a symmetrizable inverse \(M\)-matrix. Then the set function \(S\mapsto\per(A_S)\) is log-supermodular, i.e., inequality~\eqref{eq:lsm} holds for all \(S,T\subseteq[n]\).
\end{theorem}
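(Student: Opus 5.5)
We first reduce to the symmetric case. If \(A=DBD^{-1}\) with \(D\) positive diagonal and \(B\) symmetric, then every principal submatrix satisfies \(A_S=D_SB_SD_S^{-1}\). The permanent is invariant under diagonal similarity, since each permutation product picks up \(\prod_i d_i/d_{\pi(i)}=1\). Moreover \(B\) is again an inverse \(M\)-matrix. So we may assume \(A\) is a symmetric inverse \(M\)-matrix. It is then positive definite, because \(A^{-1}\) is a symmetric \(M\)-matrix.

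Next we write permanents as Gaussian moments. Let \(Z=(Z_1,\dots,Z_n)\) be a centered circularly symmetric complex Gaussian vector with \(\E[Z_i\bar Z_j]=A_{ij}\). By Wick's formula,
\[
\per(A_S)=\E\Bigl[\prod_{i\in S}|Z_i|^2\Bigr].
\]
The density of \(Z\) is proportional to \(\exp(-\bar z^{\top}Q z)\) with \(Q=A^{-1}\). Since \(Q\) has nonpositive off-diagonal entries, this density is \(\exp\bigl(\sum_{i\neq j}|q_{ij}|\,\mathrm{Re}(\bar z_i z_j)-\sum_i q_{ii}|z_i|^2\bigr)\). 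In polar coordinates \(z_j=r_je^{\ii\theta_j}\) this becomes a ferromagnetic plane-rotator (XY) type measure. The couplings \(|q_{ij}|r_ir_j\cos(\theta_i-\theta_j)\) are nonnegative, and the radial weights \(r_j\,e^{-q_{jj}r_j^2}\,dr_j\) form a single-site product measure.

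Define \(f_S=\prod_{i\in S}|Z_i|^2=\prod_{i\in S}r_i^2\). The goal is
\[
\E[f_{S\cup T}]\,\E[f_{S\cap T}]\ge\E[f_S]\,\E[f_T].
\]
Factor out the common part: set \(U=S\cap T\), \(g=\prod_{i\in S\setminus T}r_i^2\) and \(h=\prod_{i\in T\setminus S}r_i^2\). Replacing the measure by \(d\mu_U\propto f_U\,d\mu\) amounts to multiplying the single-site radial weights for \(i\in U\) by \(r_i^2\). The tilted measure is still of the same ferromagnetic form with product single-site measures. The inequality then becomes the positive correlation statement \(\E_{\mu_U}[gh]\ge\E_{\mu_U}[g]\,\E_{\mu_U}[h]\).

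The main step is to prove this positive correlation. Our plan is to use Ginibre's correlation inequality for the plane rotator with variable radii, that is, a Ginibre-type inequality on \(\mathbb C^n\) with ferromagnetic pair interaction \(\mathrm{Re}(\bar z_iz_j)\) and a rotation-invariant single-site measure. This inequality states that the expectations of products of functions in Ginibre's cone are positively correlated. The relevant cone is generated by \(\mathrm{Re}\bigl(\prod_j z_j^{m_j}\bar z_j^{k_j}\bigr)\), and in particular it contains the products of \(|z_i|^2\). The key condition to check is Ginibre's duplicated-variables positivity condition: for the single-site measure with radial density \(r^{2a+1}e^{-qr^2}\), the quantities \(\int\!\!\int\prod(\cos\pm\cos)\dots\ge0\) must be nonnegative. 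For rotation-invariant single-site measures this is the classical Ginibre result, which holds for any radial weight. We expect the main obstacle to be exactly this: confirming that \(g\), \(h\) and the coupling terms all lie in the cone to which Ginibre's theorem applies, and that its hypotheses allow unbounded radial variables with Gaussian tails. The unboundedness should be handled by truncating the radii and passing to the limit by monotone convergence. If Ginibre's framework in angle-only form is insufficient, we will instead argue via the duplicated variables \(z,z'\) directly, writing \(r_i^2-r_i'^2\) and the couplings in terms of \(z\pm z'\) and expanding in positive monomials.
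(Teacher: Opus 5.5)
\textbf{Verdict.} Your route is genuinely different from the paper's and it can be completed. However, the step you postpone carries all the content, you assert it rather than prove it, and as you phrase it (a positivity condition on the single-site measure) it is false.

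\textbf{Why a site-by-site check fails.} Writing $\mathrm{Re}(\bar z_iz_j)=x_ix_j+y_iy_j$ needs the site functions $x=\mathrm{Re}\,z$ and $y=\mathrm{Im}\,z$. For the tilted weight $r^{2a+1}e^{-qr^2}$ one computes
\[
\iint(x^2-x'^2)(y^2-y'^2)=2\bigl(\E[x^2y^2]-\E[x^2]\E[y^2]\bigr)=-\frac{a(a+1)}{4q^2}<0 .
\]
So Ginibre's condition fails site by site exactly at the tilted sites.

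\textbf{How to fill the gap.} Check the condition jointly on $\mathbb{C}^n$ for the family $F=R(r)\cos(m\cdot\theta)$, with $m\in\mathbb{Z}^n$ and $R$ a monomial in $r_1,\dots,r_n$. This family contains your $\mathrm{Re}\prod_j z_j^{m_j}\bar z_j^{k_j}$. Use the substitution $\theta=u+v$, $\theta'=u-v$ from the proof of Lemma~\ref{lem:angular}:
\[
F(z)\pm F(z')=(R\pm R')\cos(m\cdot u)\cos(m\cdot v)-(R\mp R')\sin(m\cdot u)\sin(m\cdot v).
\]
\begin{enumerate}
\item Expand $\prod_k(F_k(z)\pm F_k(z'))$ and integrate over $u,v$. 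You get squares $\bigl[\int\prod\cos\prod\sin\,du\bigr]^2$ times radial integrals $\iint\prod_k(R_k(r)\pm R_k(r'))$. Only even numbers of sines survive, so the minus signs cancel.
\item The radial integrals are nonnegative. The radial measure is a product, so the identity $ab\pm a'b'=\tfrac12[(a+a')(b\pm b')+(a-a')(b\mp b')]$ reduces to one variable. There each $r^c-r'^c$ has the sign of $r-r'$ or vanishes, and an odd number of minus factors gives an antisymmetric integrand.
\item Expand $e^{H(z)+H(z')}$ with $H=\sum_{i<j}2|q_{ij}|r_ir_j\cos(\theta_i-\theta_j)$. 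This gives $\E_{\mu_U}[gh]\ge\E_{\mu_U}[g]\E_{\mu_U}[h]$.
\item No truncation is needed. Replacing every cosine by $1$ bounds the series by a polynomial times $e^{-r^\top Qr-r'^\top Qr'}$, which is integrable because $Q\succ0$.
\end{enumerate}
Your fallback via Cartesian $z\pm z'$ would not work. The tilted weight $|z|^{2a}e^{-q|z|^2}$ is not invariant under $(z,z')\mapsto((z+z')/\sqrt2,(z-z')/\sqrt2)$.

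\textbf{Comparison with the paper.} The paper integrates out the phases and applies Ginibre only at fixed radii. This shows $\partial_{ij}\log p\ge0$ for the explicit density $p(x)=\det M\,e^{-\sum_id_ix_i}\cZ(J(x))$ (Proposition~\ref{prop:radial}). It then tilts and invokes the Karlin--Rinott FKG inequality (Lemma~\ref{lem:moments}).

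Your argument fuses these two steps into a single GKS-II inequality on $\mathbb{C}^n$, with the tilt absorbed into the single-site weights. It needs no density formula, no differentiation of $\log\cZ$, and no FKG, and it also covers Remark~\ref{rmk:mixed}.

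The paper's route buys a stronger structural fact. Because $X$ is MTP$_2$, \emph{all} increasing functions of $X$ are positively correlated under any product tilt. Ginibre's inequality reaches only observables in the cone (radial monomials times cosines), which is just enough for~\eqref{eq:lsm}.
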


\begin{theorem}[Counterexample]\label{thm:counterexample}
  The matrix
  \begin{small}
    \[
      A=\begin{pmatrix}
        13&1&7&7&7\\
        1&13&7&7&7\\
        1&1&7&7&7\\
        1&1&1&7&7\\
        1&1&1&1&7
      \end{pmatrix}
    \]
  \end{small}
is a non-symmetrizable inverse \(M\)-matrix, and \(\per(A)\per(A_{\{3,4,5\}})<\per(A_{\{1,3,4,5\}})\per(A_{\{2,3,4,5\}})\).
\end{theorem}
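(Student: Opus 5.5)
The plan is to verify the three claims by direct, exact computation: that \(A\) is an inverse \(M\)-matrix, that it is not symmetrizable, and that the permanental inequality fails. Everything is finite integer or rational arithmetic, so the only real decision is how to organize it so that it can be checked by hand.

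\emph{Inverse \(M\)-matrix.} Clearly \(A\ge 0\). To get \(A^{-1}\), I will row-reduce using the staircase structure. We have \(r_3-r_4=(0,0,6,0,0)\) and \(r_4-r_5=(0,0,0,6,0)\). Likewise \(r_1-r_3=(12,0,0,0,0)\) and \(r_2-r_3=(0,12,0,0,0)\). So \(A=LB\), where \(L\) is a simple sparse matrix and \(B\) is nearly diagonal. This gives \(\det A\neq 0\) and a closed form for \(A^{-1}\) with small rational entries. I will then check directly that every off-diagonal entry of \(A^{-1}\) is \(\le 0\). This sign check is the step most likely to hide an arithmetic slip. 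As a safeguard I will confirm \(AA^{-1}=I\) row by row.

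\emph{Non-symmetrizability.} A positive diagonal similarity \(D A D^{-1}\) leaves every cycle product \(a_{i_1i_2}a_{i_2i_3}\cdots a_{i_ki_1}\) unchanged. If \(DAD^{-1}\) is symmetric, the product along any cycle must therefore equal the product along the reversed cycle. Take the cycle \(1\to3\to4\to1\):
\[
a_{13}a_{34}a_{41}=7\cdot7\cdot1=49,\qquad a_{14}a_{43}a_{31}=7\cdot1\cdot1=7 .
\]
These differ, so \(A\) is not symmetrizable.

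\emph{The inequality.} I will compute the four permanents exactly. For \(A_{\{3,4,5\}}\), the six terms give \(343+49+49+49+7+49=546\). For the \(4\times4\) matrices \(A_{\{1,3,4,5\}}\) and \(A_{\{2,3,4,5\}}\), which are equal by the \(1\leftrightarrow2\) symmetry of the construction, I will expand along the first row. Each cofactor is a \(3\times3\) permanent of the same staircase type. For \(\per(A)\), I will expand along row~1 and reuse these \(4\times4\) and \(3\times3\) permanents. The final step is to compare \(\per(A)\cdot546\) with \(\per(A_{\{1,3,4,5\}})^2\) as integers and confirm the strict inequality. The main risk is arithmetic in the \(5\times5\) permanent, with \(120\) terms. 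Grouping by the row-1 expansion, and cross-checking with a Laplace-type expansion along column~1, should make this reliable.
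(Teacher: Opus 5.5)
Your proposal is correct and follows the paper's route: exact computation of $A^{-1}$ (which is $\frac1{84}$ times an integer matrix with the required sign pattern) and of the four permanents, $\per(A)=182784$, $\per(A_{\{3,4,5\}})=546$, $\per(A_{\{1,3,4,5\}})=\per(A_{\{2,3,4,5\}})=9996$; the two products differ only by the factor $832/833$, so your insistence on exact integer arithmetic is warranted. The only variation is the non-symmetrizability step. The paper observes that $(A^{-1})_{13}\neq0=(A^{-1})_{31}$, whereas you check cycle products on $A$ itself, $a_{13}a_{34}a_{41}=49\neq7=a_{14}a_{43}a_{31}$; this invariant argument is equally valid.
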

For the proof of Theorem~\ref{thm:main} it suffices to consider symmetric (and hence, also positive definite) inverse $M$-matrices since diagonal similarity does not change principal permanents. Our starting point is the classical representation of \(\per(A_S)\) as the moment \(\E\prod_{i\in S}|\zeta_i|^2\) of a complex Gaussian vector.  %
Then, Ginibre's inequality \citep{ginibre1970general} helps us show that the density of \(X=(|\zeta_1|^2,\dots,|\zeta_n|^2)\) %
is MTP\(_2\) (multivariate totally positive of order two)---see Proposition~\ref{prop:radial}. This property provides a key building block for a tilting based argument that closes the proof. %
Our results leave open which non-symmetrizable inverse \(M\)-matrices satisfy~\eqref{eq:lsm}; Section~\ref{sec:open} lists this and other questions.

\subsection{Related work.} \citet{johnson1982inverse} surveys inverse \(M\)-matrices, with a sequel by \citet{johnson2011inverse}; for permanental inequalities on totally positive matrices see \citet{skandera2025permanental}. The MTP property and the FKG inequality for densities are due to \citet{karlin1980classes}. \citet{eisenbaum2014characterization} characterized positively correlated squared Gaussian vectors and extended the characterization to the permanental vectors of Section~\ref{sec:alpha}; see the erratum \citep{eisenbaum2022erratum} and \citet{finner2024positive}, who show that positively associated squared Gaussian vectors need not be MTP\(_2\) and extend this to multivariate $\chi$-square distributions. Positive association alone does not give~\eqref{eq:lsm}, which needs association after the tilt by \(\prod_{i\in S\cap T}X_i\) (Remark~\ref{rmk:assoc}); the example we provide therein also contradicts Corollary~11 of the technical report of \citet{lovasz2003}.

\subsection{Notation and definitions}\label{sec:prelim}
Throughout, we use the shorthand \([n]=\{1,2,\dots,n\}\). The central class of matrices for this paper is that of inverse $M$ matrices; we define these below.
\begin{defn}[\(M\)-matrices]
A square matrix \(M=(m_{ij})\) is a nonsingular \emph{\(M\)-matrix} if: (i) \(m_{ij}\le0\) for all \(i\ne j\); and (ii) \(M\) is invertible with \(M^{-1}\ge0\) entrywise. A matrix $A$ is an \emph{inverse $M$-matrix} if \(A=M^{-1}\) for some nonsingular \(M\)-matrix \(M\). %
\end{defn}
\noindent Let us also formally define log-supermodularity.
\begin{defn}[Log-supermodularity]
A set function \(f:2^{[n]}\to\reals_{\ge0}\) is \emph{log-supermodular} if
\[
  f(S\cup T)f(S\cap T)\ge f(S)f(T),\qquad S,T\subseteq[n].
\]
\end{defn}

We are now ready to prove our main result, Theorem~\ref{thm:main}.

\section{Proof of Theorem~\ref{thm:main}}\label{sec:proof}
The proof has four steps: Step~1 reduces to symmetric \(A\); Step~2 writes \(\per(A_S)\) as a squarefree moment of \(X=(|\zeta_1|^2,\dots,|\zeta_n|^2)\), where \(\zeta\) is a complex Gaussian vector with covariance \(A\); Step~3 shows that \(X\) has an MTP\(_2\) density, and once the phases are integrated out, this reduces to a correlation inequality of Ginibre;  Step~4 finishes by observing that squarefree moments of an MTP\(_2\) density are log-supermodular.

\subsection{Step 1, symmetry} Let \(\Delta\) be a positive diagonal matrix such that \(A'=\Delta A\Delta^{-1}\) is symmetric. This is again an inverse \(M\)-matrix: \(A'\ge0\), and \((A')^{-1}=\Delta M\Delta^{-1}\) has the sign pattern of \(M=A^{-1}\). Moreover, for every \(S\subseteq[n]\) and every permutation \(\pi\) of \(S\), $\prod_{i\in S}A'_{i,\pi(i)}
 =\prod_{i\in S}\frac{\Delta_{ii}}{\Delta_{\pi(i),\pi(i)}}A_{i,\pi(i)}
 =\prod_{i\in S}A_{i,\pi(i)}$. Thus, \(\per(A'_S)=\per(A_S)\); so henceforth we may assume that \(A\) is symmetric.

\subsection{Step 2, Gaussian moments} Now that $A$ is symmetric, we also need it to be positive definite so that we can use it as the covariance matrix of a complex Gaussian vector. Fortunately, a symmetric inverse \(M\)-matrix is automatically positive definite. Indeed, put \(x=A\one\); then \(x>0\), since no row of the invertible nonnegative matrix \(A\) vanishes. Since \(Mx=\one\), the symmetric matrix \(\diag(x)M\diag(x)\) has nonpositive off-diagonal entries and \(i\)-th row sum \(x_i>0\). It is therefore strictly diagonally dominant with positive diagonal, hence positive definite, and so are \(M\) and \(A\). Let \(\zeta\in\mathbb C^n\) be a complex Gaussian vector with covariance \(A\) and density \(\pi^{-n}\det M\,e^{-z^*Mz}\). Let \(X=(|\zeta_1|^2,\ldots,|\zeta_n|^2)\). By the moment theorem for complex Gaussian vectors \citep{reed1962moment}, \(\E\prod_{i\in S}X_i=\E\prod_{i\in S}\zeta_i\overline{\zeta_i}=\per(A_S)\); in particular, all squarefree moments of \(X\) are finite.

\subsection{Step 3, MTP\(_2\) density} Next, we wish to show that the density of $X$ is MTP${}_2$. Recall that for a positive density \(p\) on \((0,\infty)^n\), the MTP\(_2\) property is
\begin{equation}
  \label{eq:2}
  p(x\vee y)p(x\wedge y)\geq p(x)p(y).
\end{equation}
For $C^2$ densities~\eqref{eq:2} is equivalent to non-negative Hessian off-diagonals, i.e., $\partial_{ij}\log p\geq0$ for \(i\ne j\).

We prove this non-negativity for the density of $X$. To that end, first write \(M=D-C\), where \(D=\diag(d_1,\dots,d_n)\) with \(d_i=M_{ii}>0\), and \(C\ge0\) is symmetric with zero diagonal~\citep[Chapter~6]{berman1994}. Let \(\cE\) be the pairs \(\{i,j\}\) with \(C_{ij}>0\); using polar coordinates \(z_i=\sqrt{x_i}e^{\ii\theta_i}\), we observe that
\[
 z^*Mz=\nlsum_id_ix_i-\nlsum_{\{i,j\}\in\cE}2C_{ij}\sqrt{x_ix_j}\cos(\theta_i-\theta_j).
\]
Now integrate out \(\theta\) to obtain the density
\begin{equation}\label{eq:density}
 p(x)=\det M\exp\!\left(-\nlsum_i d_ix_i\right)\cZ(J(x)),
 \qquad J_{\{i,j\}}(x)=2C_{ij}\sqrt{x_ix_j},
\end{equation}
where \(\cZ\) is the partition function on $\cE$ defined as follows: For \(e=\{i,j\}\in\cE\) set \(\omega_e(\theta)=\cos(\theta_i-\theta_j)\), where \(\theta\in\T^n=(\reals/2\pi\mathbb Z)^n\). For nonnegative couplings \(J_e\), define
\[
 \cZ(J) :=\int_{\T^n}\exp\!\left(\nlsum_{e\in\cE}J_e\omega_e(\theta)\right)d\theta,
 \qquad
 d\mu_J(\theta)=\cZ(J)^{-1}
 \exp\!\left(\nlsum_{e\in\cE}J_e\omega_e(\theta)\right)d\theta,
\]
with normalized Haar measure \(d\theta\). Write \(\E_J\) and \(\Cov_J\) for expectation and covariance under \(\mu_J\); differentiation under the integral then gives
\[
 \frac{\partial\log\cZ}{\partial J_e}=\E_J[\omega_e],
 \qquad
 \frac{\partial^2\log\cZ}{\partial J_e\partial J_f}
 =\Cov_J(\omega_e,\omega_f).
\]
The sign of \(\partial_{ij}\log p\) is therefore controlled by the angular means \(\E_J[\omega_e]\) and covariances \(\Cov_J(\omega_e,\omega_f)\).

Inequality~\eqref{eq:3} below is a case of Ginibre's inequality \citep{ginibre1970general} for edge observables. We include its proof to make explicit which correlation property is used.
\begin{lemma}\label{lem:angular}
For every \(e,f\in\cE\), $\E_J[\omega_e]\geq0$, and $\Cov_J(\omega_e,\omega_f)\geq0$.
\end{lemma}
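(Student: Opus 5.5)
We use the Ginibre duplicated-variable trick for the XY (plane rotator) model with ferromagnetic couplings. For the mean, expand the exponential as a power series, $\exp(\sum_e J_e\omega_e)=\sum_{k}\prod_e J_e^{k_e}\omega_e^{k_e}/k_e!$, with all coefficients nonnegative because $J_e\ge0$. Writing $\cos(\theta_i-\theta_j)=\tfrac12(e^{\ii(\theta_i-\theta_j)}+e^{-\ii(\theta_i-\theta_j)})$, each monomial $\omega_e\prod_f\omega_f^{k_f}$ becomes a nonnegative combination of characters $e^{\ii\langle m,\theta\rangle}$ with $m\in\mathbb Z^n$. Integrating against normalized Haar measure keeps only $m=0$, which contributes $1$. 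Hence $\int\omega_e\exp(\sum J\omega)\,d\theta\ge0$, and dividing by $\cZ(J)>0$ gives $\E_J[\omega_e]\ge0$.

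For the covariance, double the system: take $\theta,\theta'\in\T^n$ independent, each distributed as $\mu_J$. Then
\[
2\Cov_J(\omega_e,\omega_f)=\E_{J\otimes J}\bigl[(\omega_e(\theta)-\omega_e(\theta'))(\omega_f(\theta)-\omega_f(\theta'))\bigr].
\]
Change variables to $\alpha=(\theta+\theta')/2$ and $\beta=(\theta-\theta')/2$. This is a measure-preserving map up to the usual covering issue on the torus, which is handled by working on the double cover or by Ginibre's original setup. For $e=\{i,j\}$ set $a_e=\alpha_i-\alpha_j$ and $b_e=\beta_i-\beta_j$. The trigonometric identities
\[
\cos(a+b)+\cos(a-b)=2\cos a\cos b,\qquad \cos(a+b)-\cos(a-b)=-2\sin a\sin b
\]
then give
\[
\omega_e(\theta)+\omega_e(\theta')=2\cos a_e\cos b_e,\qquad \omega_e(\theta)-\omega_e(\theta')=-2\sin a_e\sin b_e.
\]
The joint Boltzmann weight becomes $\exp\bigl(\sum_e 2J_e\cos a_e\cos b_e\bigr)$, and the integrand becomes $4\sin a_e\sin b_e\sin a_f\sin b_f$. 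Expand the exponential in powers again. Every resulting term is a nonnegative constant times a product of an $\alpha$-factor, $\prod\cos a\cdot\sin a_e\sin a_f$, and the identical $\beta$-factor. So the integral over $(\alpha,\beta)$ factorizes termwise as $\bigl(\int F(\alpha)\,d\alpha\bigr)\bigl(\int F(\beta)\,d\beta\bigr)$ with the same $F$, which is a square and therefore nonnegative. Summing gives $\Cov_J\ge0$.

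The main obstacle is justifying the change of variables on $\T^n\times\T^n$. The map $(\theta,\theta')\mapsto(\alpha,\beta)$ is not a bijection of tori; halving the angles is ambiguous modulo $\pi$. I would handle this by lifting to $[0,2\pi)^{2n}$ with the integrand periodic, noting that the linear map has Jacobian of constant absolute value. The image region, when tiled, covers a product domain of $(\alpha,\beta)$ of periods $2\pi$ a fixed number of times. This uses the fact that all functions involved depend only on $\alpha_i\pm\beta_i$ and are therefore invariant under the lattice generated by $(\pi,\pi)$ shifts in each coordinate pair. Once the integral is written over a product domain in $(\alpha,\beta)$ with identical $\alpha$ and $\beta$ structure, the symmetric-square argument goes through. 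Exchanging sum and integral is justified by absolute convergence, since everything is bounded on a compact set.
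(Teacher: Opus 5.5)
Your proof is correct and is essentially the paper's argument: nonnegative Fourier coefficients give the mean bound, and Ginibre's duplication with sum/difference variables turns the covariance into a sum of squares. The one difference is that the paper substitutes $\theta=u+v$, $\phi=u-v$ directly: the map $(u,v)\mapsto(u+v,u-v)$ is a surjective homomorphism of $\T^{2n}$, so it pushes normalized Haar measure to normalized Haar measure, and this avoids the halving-modulo-$\pi$ issue that you handle (correctly) by lifting and tiling.
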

\begin{proof}
The absolutely convergent Fourier expansion $e^{J\cos u} =\sum_{p,q\geq0}\frac{(J/2)^{p+q}}{p!q!}e^{\ii(p-q)u}$ has nonnegative coefficients. The product over edges has the same property, so its integral against \(\cos(\theta_i-\theta_j)\) is nonnegative. Division by \(\cZ(J)>0\) proves the first inequality.

For the covariance, introduce two independent angular variables \(\theta,\phi\). With \(H(\theta)=\sum_gJ_g\omega_g(\theta)\),
\[
 2\cZ(J)^2\Cov_J(\omega_e,\omega_f)
 =\int (\omega_e(\theta)-\omega_e(\phi))(\omega_f(\theta)-\omega_f(\phi))
 e^{H(\theta)+H(\phi)}\,d\theta\,d\phi.
\]
The map \((u,v)\mapsto(u+v,u-v)\) is a surjective homomorphism of \(\T^{2n}\), and therefore pushes normalized Haar measure to normalized Haar measure. Orient each edge arbitrarily and write \(u_e=u_i-u_j\) and \(v_e=v_i-v_j\). Under the substitution \(\theta=u+v\), \(\phi=u-v\),
\[
 \omega_e(\theta)-\omega_e(\phi)=-2\sin u_e\sin v_e,
 \qquad
 H(\theta)+H(\phi)=2\sum_gJ_g\cos u_g\cos v_g.
\]
Expanding the exponential gives
\begin{equation}
  \label{eq:3}
  \Cov_J(\omega_e,\omega_f)
  =\frac{2}{\cZ(J)^2}
  \nlsum_{m\in\mathbb N_0^\cE}
  \left(\prod\nolimits_g\frac{(2J_g)^{m_g}}{m_g!}\right)
  \left[
    \int_{\T^n}\sin u_e\sin u_f\prod\nolimits_g(\cos u_g)^{m_g}\,du
  \right]^2\geq0,
\end{equation}
which completes the proof.
\end{proof}
We are now ready to formally state the desired MTP$_2$ property.
\begin{prop}\label{prop:radial}
Let \(A\) be a symmetric inverse \(M\)-matrix, and let \(\zeta\in\mathbb C^n\) %
be a complex Gaussian vector with covariance \(A\). Then \(X=(|\zeta_1|^2,\ldots,|\zeta_n|^2)\) has an MTP\(_2\) density on \((0,\infty)^n\).
\end{prop}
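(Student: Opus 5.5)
Starting from the density formula~\eqref{eq:density}, I will verify the MTP\(_2\) property through the Hessian criterion. First I check the hypotheses of that criterion. Since \(J(x)\) is a smooth function of \(x\) on \((0,\infty)^n\), the density \(p\) is positive and \(C^\infty\) there. Moreover, \(\cZ\) is an entire function of the finitely many couplings, and \(\cZ>0\).

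The linear term \(-\sum_i d_ix_i\) has vanishing mixed partials, so for \(i\ne j\) we have \(\partial_{ij}\log p=\partial_{ij}\log\cZ(J(x))\). I will expand this by the chain rule:
\[
\partial_{ij}\log\cZ(J(x))=\nlsum_{e,f\in\cE}\Cov_J(\omega_e,\omega_f)\,\partial_iJ_e\,\partial_jJ_f+\nlsum_{e\in\cE}\E_J[\omega_e]\,\partial_{ij}J_e .
\]
I then inspect the two sums separately.

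\emph{First sum.} Each \(J_e=2C_e\sqrt{x_kx_l}\) is nondecreasing in every coordinate, so \(\partial_iJ_e\ge0\) and \(\partial_jJ_f\ge0\). By Lemma~\ref{lem:angular}, every covariance \(\Cov_J(\omega_e,\omega_f)\) is nonnegative. Hence every term of the first sum is nonnegative.

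\emph{Second sum.} The mixed partial \(\partial_{ij}J_e\) vanishes unless \(e=\{i,j\}\). In that case
\[
\partial_{ij}\bigl(2C_{ij}\sqrt{x_ix_j}\bigr)=\tfrac{C_{ij}}{2\sqrt{x_ix_j}}\ge0 ,
\]
and the factor \(\E_J[\omega_e]\) is nonnegative by the first part of Lemma~\ref{lem:angular}. So the second sum is nonnegative as well.

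Together these give \(\partial_{ij}\log p\ge0\) for all \(i\ne j\). The equivalence stated before~\eqref{eq:2} for \(C^2\) densities then yields~\eqref{eq:2}. I would justify that equivalence briefly for completeness. Because \((0,\infty)^n\) is a product of intervals, it suffices to integrate the mixed partial over the rectangle spanned by two coordinates, which gives the two-coordinate case. The general lattice inequality then follows by changing one coordinate pair at a time, the standard Karlin–Rinott reduction.

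The main obstacle is justifying the density formula~\eqref{eq:density} itself: the polar change of variables and the integration over the phases. The Jacobian of \(z_i=\sqrt{x_i}e^{\ii\theta_i}\) is \(\tfrac12\,dx_i\,d\theta_i\) per coordinate. Combined with \(\pi^{-n}\) and normalized Haar measure \(d\theta\) on \(\T^n\), this gives exactly the constant \(\det M\). I would also record that the differentiation under the integral used to obtain \(\partial\log\cZ/\partial J_e\) is legitimate, because the integrand is bounded on the compact torus together with all of its \(J\)-derivatives. Beyond that, the argument is just the sign bookkeeping above, and all of the genuine correlation content sits in Lemma~\ref{lem:angular}.
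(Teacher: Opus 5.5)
Your proposal is correct and follows the paper's proof: you use the chain rule to split \(\partial_{ij}\log p=\partial_{ij}\log\cZ(J(x))\) into a covariance term and a mean term, and you get nonnegativity from Lemma~\ref{lem:angular} together with \(\partial_iJ_e\ge0\) and \(\partial_{ij}J_{\{i,j\}}=C_{ij}/(2\sqrt{x_ix_j})=J_{\{i,j\}}/(4x_ix_j)\ge0\). The extra justifications you add (the polar Jacobian giving the constant \(\det M\), differentiation under the integral, and the reduction from the Hessian condition to~\eqref{eq:2}) are sound, and they fill in details the paper leaves implicit.
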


\begin{proof}
By~\eqref{eq:density} and the derivatives of \(\log\cZ\) above, for \(i\ne j\),
\begin{align*}
 \partial_i\partial_j\log p(x)
 &=\nlsum_e\E_J[\omega_e]\,\partial_i\partial_jJ_e(x)+\nlsum_{e,f}\Cov_J(\omega_e,\omega_f)
          \,\partial_iJ_e(x)\,\partial_jJ_f(x)\geq0.
\end{align*}
Indeed, all first derivatives of the couplings are nonnegative, and their mixed derivatives at distinct coordinates vanish except for \(e=\{i,j\}\), where
\[
 \partial_{ij}J_e(x)=\frac{J_e(x)}{4x_ix_j}\geq0.
\]
Lemma~\ref{lem:angular} supplies the remaining nonnegative factors.
\end{proof}

\subsection{Step 4, squarefree moments} The MTP\(_2\) property proves the Koteljanskii inequality~\eqref{eq:lsm} after a change of measure, as shown below. %

\begin{lemma}\label{lem:moments}
Suppose that a random vector \(X\in(0,\infty)^n\) has an MTP\(_2\) density and finite squarefree moments. Then, the set function \(f(S)=\E\prod_{i\in S}X_i\) is log-supermodular.
\end{lemma}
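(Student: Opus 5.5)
Set \(K=S\cap T\), \(S'=S\setminus K\), \(T'=T\setminus K\). Then \(S'\cap T'=\varnothing\) and \(S\cup T=K\cup S'\cup T'\). If \(f(K)=0\) there is nothing to prove; in fact \(f(K)>0\) always, since \(X>0\) almost surely. The idea is to tilt by the common factor. Define a probability measure \(\tilde P\) with density proportional to \(\tilde p(x)=p(x)\prod_{i\in K}x_i\). This is well defined because \(f(K)<\infty\). Dividing the desired inequality by \(f(K)^2\), it becomes
\[
\tilde\E\Bigl[\prod\nolimits_{i\in S'}X_i\prod\nolimits_{j\in T'}X_j\Bigr]\ \ge\ \tilde\E\Bigl[\prod\nolimits_{i\in S'}X_i\Bigr]\,\tilde\E\Bigl[\prod\nolimits_{j\in T'}X_j\Bigr].
\]
Indeed, \(\tilde\E[\prod_{S'}X]=f(S)/f(K)\), \(\tilde\E[\prod_{T'}X]=f(T)/f(K)\), and \(\tilde\E[\prod_{S'\cup T'}X]=f(S\cup T)/f(K)\), the last using that \(S',T',K\) are disjoint. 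So the statement reduces to a positive correlation of two functions under \(\tilde P\).

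Next, \(\tilde p\) is still MTP\(_2\). The factor \(g(x)=\prod_{i\in K}x_i\) satisfies \(g(x\vee y)g(x\wedge y)=g(x)g(y)\), because \(\max(a,b)\min(a,b)=ab\) coordinatewise. Multiplying~\eqref{eq:2} by this identity gives~\eqref{eq:2} for \(\tilde p\).

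Then apply the FKG inequality for MTP\(_2\) densities \citep{karlin1980classes} to \(\tilde P\). The functions \(\phi(x)=\prod_{S'}x_i\) and \(\psi(x)=\prod_{T'}x_j\) are nonnegative and coordinatewise nondecreasing on \((0,\infty)^n\), so \(\tilde\E[\phi\psi]\ge\tilde\E\phi\,\tilde\E\psi\). This is exactly the displayed inequality.

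The main obstacle is integrability. The Karlin--Rinott FKG statement is usually given for bounded (or square-integrable) increasing functions, while here \(\phi,\psi\) are unbounded. \(\tilde\E[\phi\psi]\) is finite because \(f(S\cup T)<\infty\), but we do not a priori have \(\tilde\E\phi^2<\infty\). I would handle this by truncation. Set \(\phi_N=\min(\phi,N)\) and \(\psi_N=\min(\psi,N)\). These are bounded, nonnegative and increasing, so FKG gives \(\tilde\E[\phi_N\psi_N]\ge\tilde\E\phi_N\,\tilde\E\psi_N\). Letting \(N\to\infty\), monotone convergence on both sides yields the inequality for \(\phi,\psi\), with all limits finite by the finiteness of squarefree moments.

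If the boundary is a concern, one should also check that FKG applies to a density on the open orthant \((0,\infty)^n\), a distributive lattice. The standard proof via Holley/Preston or induction on dimension works on any product of totally ordered sets.
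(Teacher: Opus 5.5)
Your proposal is correct and follows the paper's proof essentially step for step. Both arguments tilt by \(\prod_{i\in S\cap T}x_i\), observe that the tilted density stays MTP\(_2\) because the factor is a product of single-coordinate functions, apply the Karlin--Rinott FKG inequality to the truncated monomials \(\min(\phi,N)\) and \(\min(\psi,N)\), and pass to \(N\to\infty\) by monotone convergence using the finiteness of the squarefree moments.
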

\begin{proof}
Fix \(S,T\) and put \(R=S\cap T\). The density $p_R(x)=f(R)^{-1}\prod_{i\in R}x_ip(x)$
is MTP\(_2\), since the multiplying factor is a product of functions of individual coordinates. By the FKG inequality for MTP\(_2\) densities \citep{karlin1980classes}, increasing functions are positively correlated under this density. Apply this rule to the increasing functions \(u(x)=\prod_{i\in S\setminus R}x_i\) and \(v(x)=\prod_{i\in T\setminus R}x_i\), first truncated at a level \(N\) and then with \(N\to\infty\) by monotone convergence; the assumed moments make all terms finite. It yields
\[
 \frac{f(S\cup T)}{f(R)}
 \geq \frac{f(S)}{f(R)}\frac{f(T)}{f(R)},
\]
which proves the assertion.
\end{proof}

\noindent Now we have all the steps, and can now state the final proof.
\begin{proof}[\textbf{Proof of Theorem~\ref{thm:main}}]
By Step~1 we may assume that \(A\) is symmetric. By Step~3's Proposition~\ref{prop:radial}, the vector \(X\) has an MTP\(_2\) density, while by Step~2 its squarefree moments are \(\per(A_S)\). Thus, using Step~4's Lemma~\ref{lem:moments} we may immediately conclude~\eqref{eq:lsm}.
\end{proof}

\begin{rmk}[Repeated indices]\label{rmk:mixed}
The same proof gives log-supermodularity of all mixed moments of \(X\) on \(\mathbb N_0^n\): tilt by \(x^{a\wedge b}\) and apply the FKG inequality to \(x^{a-a\wedge b}\) and \(x^{b-a\wedge b}\). By the same moment theorem \citep{reed1962moment}, these moments are the permanents of the matrices obtained from \(A\) by repeating row and column \(i\) the same number of times.
\end{rmk}

\begin{rmk}[Positive association is not enough]\label{rmk:assoc}
The tilt in the proof of Lemma~\ref{lem:moments} cannot be skipped: positive association of a random vector does not make its squarefree moments log-supermodular. For a random vector \(W\in\reals_{\ge0}^n\) write \(\Phi_W(S)=\E\prod_{i\in S}W_i\). For independent \(N\sim\operatorname{Poisson}(\lambda)\) and \(N'\sim\operatorname{Poisson}(\mu)\), a constant \(a\ge0\), and \(W=(a+N,a+N+N',a+N')\), a direct moment calculation gives
\[
 \Phi_W(\{1,2,3\})\Phi_W(\{2\})-\Phi_W(\{1,2\})\Phi_W(\{2,3\})=-\lambda\mu.
\]
The vector \(W\) is positively associated, since its coordinates are nondecreasing functions of the independent variables \(N\) and \(N'\) \citep{esary1967association}.

The same example, with \(a=0\) and \(\lambda=\mu=1\), shows that disjoint convolution does not preserve log-supermodularity. Here \(\Phi_W(\{1,2,3\})\Phi_W(\{2\})=4\cdot2<3\cdot3=\Phi_W(\{1,2\})\Phi_W(\{2,3\})\). Since \(W\) is the sum of the independent vectors \((N,N,0)\) and \((0,N',N')\), expanding \(\prod_{i\in S}W_i\) over the two summands shows that \(\Phi_W\) is the disjoint convolution
\[
  (f\dotast g)(S):=\sum_{R\subseteq S}f(R)g(S\setminus R)
\]
of their moment functions \(f\) and \(g\). Both are log-supermodular: \(f\) vanishes outside \(2^{\{1,2\}}\), where the only nontrivial instance is \(f(\{1,2\})f(\varnothing)=2\ge1=f(\{1\})f(\{2\})\), and similarly for \(g\) on \(2^{\{2,3\}}\). The example therefore contradicts Corollary~11 of the technical report of \citet{lovasz2003}, which asserts that the disjoint convolution of two log-supermodular functions is log-supermodular.
\end{rmk}

\section{Counterexamples}\label{sec:counter}
This section proves Theorem~\ref{thm:counterexample}; it also briefly considers the more general setup of \(\alpha\)-permanents, where a similar negative result holds (Theorem~\ref{thm:allalpha}). Both results come from inverse \(M\)-matrices whose inverses have the directed graph of two cycles that share a path; the matrix of Theorem~\ref{thm:counterexample} belongs, after a column scaling, to the family used for Theorem~\ref{thm:allalpha} (Remark~\ref{rmk:family}).

\subsection{Proof of Theorem~\ref{thm:counterexample}}
\begin{proof}[\textbf{Proof of Theorem~\ref{thm:counterexample}}]
Direct multiplication gives
\[
 A^{-1}=\frac1{84}\begin{pmatrix}
 7&0&-7&0&0\\
 0&7&-7&0&0\\
 0&0&14&-14&0\\
 0&0&0&14&-14\\
 -1&-1&0&0&14
 \end{pmatrix}.
\]
This matrix has nonpositive off-diagonal entries and inverse \(A>0\). The matrix \(A\) is not symmetrizable: if \(\Delta A\Delta^{-1}\) were symmetric, so would be its inverse \(\Delta A^{-1}\Delta^{-1}\), but a diagonal similarity preserves the zero pattern of \(A^{-1}\), and \((A^{-1})_{13}\ne0=(A^{-1})_{31}\). Take \(S=\{1,3,4,5\}\), \(T=\{2,3,4,5\}\), and \(R=\{3,4,5\}\). The four permanents are
\[
 \per(A)=182784,\qquad \per(A_R)=546,\qquad
 \per(A_S)=\per(A_T)=9996.
\]
Consequently,
\[
 \per(A)\per(A_R)=99800064
 <99920016=\per(A_S)\per(A_T),
\]
and their ratio is \(832/833\).
\end{proof}

\subsection{\texorpdfstring{\(\alpha\)}{alpha}-permanents}\label{sec:alpha}
While developing Theorems~\ref{thm:main} and~\ref{thm:counterexample} we wondered whether similar results might hold for the \(\alpha\)-permanents of \citet{verejones1997alpha}. The situation here turns out to be more nuanced, as noted in Remark~\ref{rmk:real} below.

For \(\alpha>0\), the \(\alpha\)-permanent of a matrix \(H\) indexed by \(S\) is
\begin{equation}\label{eq:alphaper}
  \per_\alpha(H)=\sum_{\pi\in\mathfrak S(S)}\alpha^{c(\pi)}\prod_{i\in S}H_{i,\pi(i)},
\end{equation}
where \(\mathfrak S(S)\) is the group of permutations of \(S\) and \(c(\pi)\) is the number of cycles of \(\pi\); the ordinary permanent is \(\alpha=1\). A random vector \(Y\in[0,\infty)^n\) with Laplace transform \(\det(I+A\Lambda)^{-\alpha}\), where \(\Lambda=\diag(s_1,\dots,s_n)\), is an \(\alpha\)-permanental vector with kernel \(A\) in the sense of \citet{verejones1997alpha}, and its squarefree moments are the principal \(\alpha\)-permanents,
\begin{equation}\label{eq:vj}
  \E\prod_{i\in S}Y_i=\per_\alpha(A_S),\qquad S\subseteq[n],
\end{equation}
by \citet[Proposition~4.2]{verejones1997alpha}. \citet{eisenbaum2009permanental} proved that an infinitely divisible permanental vector with kernel \(A\) exists if and only if \(A\) is, up to diagonal similarity, the Green function of a transient Markov chain, that is, an inverse \(M\)-matrix.

\begin{rmk}[Positive cases]\label{rmk:real}
For symmetric positive definite \(A\) there is a real Gaussian analogue. If \(\eta\sim N(0,A)\), then \((\eta_i^2)_{i}\) is MTP\(_2\) exactly when \(A^{-1}\) is, up to a signature, an \(M\)-matrix \citep{karlin1981total}. The vector \((\eta_i^2/2)_i\) has Laplace transform \(\det(I+A\Lambda)^{-1/2}\), so by~\eqref{eq:vj} its squarefree moments are \(\per_{1/2}(A_S)\), and \(\E\prod_{i\in S}\eta_i^2=2^{|S|}\per_{1/2}(A_S)\). Lemma~\ref{lem:moments} and the diagonal similarity argument above therefore give Theorem~\ref{thm:main} for \(\per_{1/2}\), but not for the permanent itself.
If \(A\) is a symmetric inverse \(M\)-matrix and the graph of \(A^{-1}\) is a tree, \citet[Section~4]{royen2025improved} proves that the \(\alpha\)-permanental vector with kernel \(A\) has an MTP\(_2\) density for every \(\alpha>0\); formula~\eqref{eq:vj} and Lemma~\ref{lem:moments} then give log-supermodularity of \(S\mapsto\per_\alpha(A_S)\) for every \(\alpha>0\).
\end{rmk}

The Gaussian argument uses \(\alpha=1\) (complex) and \(\alpha=\tfrac12\) (real) in an essential way, and for other \(\alpha\) the symmetrizable case is open when the graph of \(A^{-1}\) has cycles (Section~\ref{sec:open}). For general inverse \(M\)-matrices the answer is negative. Every inverse \(M\)-matrix is the kernel of an \(\alpha\)-permanental vector for every \(\alpha>0\) \citep{eisenbaum2009permanental}, so one may ask whether some other order \(\alpha\) gives the analogue of~\eqref{eq:lsm} for all inverse \(M\)-matrices. None does---see Appendix~\ref{app:alpha} for details.

\section{Open problems}\label{sec:open}

\begin{enumerate}[label=(\alph*),itemsep=1pt]
  \item Characterize the inverse \(M\)-matrices whose principal permanents are log-supermodular. By Theorem~\ref{thm:main} this class contains the symmetrizable ones, and by Theorem~\ref{thm:counterexample} it is smaller than the class of all inverse \(M\)-matrices.
  \item For symmetrizable inverse \(M\)-matrices and \(\alpha\notin\{\tfrac12,1\}\), decide whether \(S\mapsto\per_\alpha(A_S)\) is log-supermodular. Theorem~\ref{thm:main} and Remark~\ref{rmk:real} settle \(\alpha=1\), \(\alpha=\tfrac12\), and every \(\alpha>0\) when the graph of \(A^{-1}\) is a tree.
  \item The proof of Theorem~\ref{thm:allalpha} needs the dimension to grow with \(\alpha\). For fixed \(n\), is there an \(\alpha_n\) such that \(S\mapsto\per_\alpha(A_S)\) is log-supermodular for every \(n\times n\) inverse \(M\)-matrix and every \(\alpha\ge\alpha_n\)?
  \item For an inverse \(M\)-matrix \(A\), consider
  \[
    f_A(x_0,x)=\nlsum_{S\subseteq[n]}\per(A_S)^{-1}x_0^{\,n-|S|}\prod\nolimits_{i\in S}x_i.
  \]
  For positive semidefinite \(A\), the polynomial with coefficients \(\det(A_S)\) in place of \(1/\per(A_S)\) is \(\det(x_0I+\diag(x)A)\), which is real stable \citep{branden08} and hence Lorentzian \citep{branden2020lorentzian}. Is \(f_A\) Lorentzian for every inverse \(M\)-matrix \(A\)? Numerical tests with \(n\le5\) found no counterexample, including the matrix of Theorem~\ref{thm:counterexample}; for random nonnegative \(A\), by contrast, \(f_A\) is usually not Lorentzian, and for symmetric inverse \(M\)-matrices the polynomial with coefficients \(\per(A_S)\) was never Lorentzian.
\end{enumerate}

\section*{AI Usage} The conjecture was formulated by the author in April 2015, with partial proofs. After a sequence of back and forth discussions with GPT-5.5 (Pro) the first proof was attained. Subsequently, an audit with GPT-6 (Astra) revealed a few small errors, that were fixed. AI assisted with the drafting of the paper (GPT-6, and Claude Opus 5.5 and Fable 5.1), while the author strove to rewrite the entire paper by hand to simplify, polish, and compactify.

\section*{Acknowledgements}
The author (SS) thanks the Alexander von Humboldt (AvH) Foundation for
their generous support via a Humboldt Professorship in AI.

\appendix
\section{More on $\alpha$-permanents}
\label{app:alpha}
\begin{theorem}\label{thm:allalpha}
For every \(\alpha>0\) there are an integer \(n\), an entrywise positive \(n\times n\) inverse \(M\)-matrix \(A\), and sets \(S,T\subseteq[n]\) such that
\[
 \per_\alpha(A_{S\cup T})\per_\alpha(A_{S\cap T})
 <\per_\alpha(A_S)\per_\alpha(A_T).
\]
\end{theorem}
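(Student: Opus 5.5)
The plan is to generalize the matrix of Theorem~\ref{thm:counterexample}. As announced at the start of Section~\ref{sec:counter}, I would use inverse \(M\)-matrices whose inverse has the directed graph of two cycles sharing a path. Let \(L\ge1\) and take indices \(1,2\) together with a path \(3\to4\to\cdots\to m\), where \(m=L+2\). Set \(M=D-C\), where \(C\) has positive weights exactly on the arcs \(1\to3\), \(2\to3\), \(k\to k+1\) for \(3\le k<m\), and \(m\to1\), \(m\to2\). Choose the weights (depending on \(\alpha\) and \(L\)) so that \(M\) is a nonsingular \(M\)-matrix. Since the graph of \(C\) is strongly connected, \(A=M^{-1}=\sum_{k\ge0}(D^{-1}C)^kD^{-1}\) is then entrywise positive. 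Take \(S=\{1\}\cup R\), \(T=\{2\}\cup R\) and \(R=\{3,\dots,m\}\), exactly as in Theorem~\ref{thm:counterexample}. The entries of \(A\) are explicit walk sums. A walk from \(i\) to \(j\) runs along the shared path and around either cycle, so every \(A_{ij}\) is a geometric-type rational function of two cycle weights \(w_1,w_2\) (the products of the normalized arc weights around each cycle). This is what makes the family computable.

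The quantity to control is
\[
F_\alpha=\per_\alpha(A)\per_\alpha(A_R)-\per_\alpha(A_S)\per_\alpha(A_T).
\]
Expand each \(\per_\alpha\) by \eqref{eq:alphaper} as a polynomial in \(\alpha\), grouped by cycle structure. Writing \(\per_\alpha(H)=\alpha^{|S|}\prod_i h_{ii}\,\Psi_\alpha(H)\), I would expand \(\log\Psi_\alpha\) as a cluster sum over cycles of \(\pi\). In \(\log F\)-form, cycle contributions living inside \(S\) or inside \(T\) cancel between the two sides. What survives consists only of terms that use both indices \(1\) and \(2\) (these lie in \(S\cup T\) but not in \(S\) or \(T\)), together with correction terms from the nonlinearity of the logarithm. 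The key structural point is this. Cycles of \(\pi\) passing through both \(1\) and \(2\) carry a factor like \(A_{12}A_{21}\), which by construction is small: reaching \(2\) from \(1\) requires traversing a whole cycle. The nonlinear correction terms, by contrast, couple \(1\) and \(2\) through the shared indices in \(R\) with a negative sign, and their weight grows with \(L\) and with \(\alpha\) (each of the \(\approx\alpha^{L}\)-weighted configurations on \(R\) contributes).

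So the steps, in order, are as follows. (i) Write down \(M\) and verify the \(M\)-matrix property and positivity of \(A\). (ii) Compute the diagonal-normalized entries \(A_{ij}/\sqrt{A_{ii}A_{jj}}\) in terms of \(w_1,w_2\) and \(L\). (iii) Obtain the leading asymptotics of \(F_\alpha/(\alpha^{2m}\prod_i A_{ii}\prod_{i\in R}A_{ii})\) in a suitable scaling limit. I would let the cross-products \(A_{12}A_{21}\) and \(A_{1j}A_{j2}\)-type terms tend to zero at a rate chosen relative to \(L\). (iv) Check that the leading term is negative once \(L\) exceeds a threshold depending on \(\alpha\), and pick \(n=L+2\) accordingly. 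For \(\alpha=1\), Theorem~\ref{thm:counterexample} serves as a sanity check, since after column scaling it is a member of the family.

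The main obstacle is step (iii). There are two sources of difficulty: the sign of \(F_\alpha\) comes from a delicate cancellation between terms of the same order, and the count of relevant permutations grows with \(L\). My first attempt would be a transfer-matrix computation. Because the graph of \(M\) is two cycles on a common path, the permanents of \(A_R\), \(A_S\), \(A_T\), \(A\) should satisfy a linear recursion in \(L\) with coefficients polynomial in \(\alpha\), and \(F_\alpha\) becomes a difference of products of dominant eigenvalue powers. The negativity then reduces to comparing the dominant eigenvalues and their eigenvector coefficients as \(L\to\infty\). If that recursion is too unwieldy, the fallback is the small-coupling regime: perturb around the diagonal matrix to second order in the off-diagonal weights, keeping \(L\) fixed but large. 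In that regime only permutations made of transpositions and one long cycle matter, and the sign can be read off directly.
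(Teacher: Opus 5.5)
\textbf{There is a genuine gap: the sign of $F_\alpha$ is never established.} Your family, your sets $S=\{1\}\cup R$ and $T=\{2\}\cup R$, and your regime (small cycle weights, long shared path) match the paper. But steps (iii)--(iv), where that sign is decided, are not carried out. The routes you sketch for them would not work as stated.

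Write $k=|R|$ (your $L$). Normalize as the paper does: $A=(I-B)^{-1}$ with unit path arcs and both return arcs of weight $t$, so both cycle weights equal $t$, and put $z=2t$.

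\textbf{The transfer-matrix route fails.} For fixed cycle weights, the principal $\alpha$-permanents grow faster than any exponential in $k$. So there is no finite linear recursion in $k$ and no dominant eigenvalue. Already for $\alpha=1$, $(A_R)_{r_ir_j}$ equals $(1-z)^{-1}$ for $i\le j$ and $z(1-z)^{-1}$ for $i>j$. Hence $\per(A_R)$ is an Eulerian polynomial divided by $(1-z)^k$, namely $\sum_{m\ge0}(1-z)z^m(1+m)^k$, a mixture of infinitely many exponentials.

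\textbf{The fallback is misdescribed.} The only meaningful small parameters are the cycle weights. If every arc carries weight $\epsilon$, each nontrivial cycle of $\pi$ already costs at least $\epsilon^{k+1}$, so ``second order in the off-diagonal weights'' sees nothing. At second order in $t$, the relevant terms are not ``transpositions and one long cycle.'' The cycles through both $1$ and $2$ are also not negligible: they are $O(t^2)$, the same order as the negative terms, and the sign comes from a genuine competition between them.

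\textbf{Your $\alpha$-heuristic points the wrong way.} For fixed entrywise positive $A$,
$\per_\alpha(A_U)=\alpha^{|U|}\prod_{i\in U}a_{ii}\bigl(1+\alpha^{-1}\sum_{\{i,j\}\subseteq U}\rho_{ij}+O(\alpha^{-2})\bigr)$, with $\rho_{ij}=a_{ij}a_{ji}/(a_{ii}a_{jj})$. So the log-supermodularity defect for your $S,T$ is $\rho_{12}/\alpha+O(\alpha^{-2})>0$. Large $\alpha$ works against you, and in the small-$t$ regime the required $k$ grows roughly linearly in $\alpha$.

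\textbf{What the missing computation looks like in your polymer language.} The first-order terms cancel, because once-winding cycles never contain both $1$ and $2$. The order-$t^2$ part of $\log\per_\alpha(A)+\log\per_\alpha(A_R)-\log\per_\alpha(A_S)-\log\per_\alpha(A_T)$ has two pieces:
\begin{itemize}
\item $+\alpha^{-1}(1+2/\alpha)^k\,t^2$, from single cycles $1\to Q_a\to 2\to Q_b\to 1$ that wind twice, where $Q_a,Q_b\subseteq R$ are disjoint and each is traversed in path order;
\item $-\bigl((1+1/\alpha)^{2k}-(1+2/\alpha)^k\bigr)t^2$, from overlapping pairs of once-winding cycles, one through $1$ and one through $2$ (Ursell coefficient $-1$).
\end{itemize}
The total is $t^2\bigl[(1+\tfrac1\alpha)(1+\tfrac2\alpha)^k-(1+\tfrac1\alpha)^{2k}\bigr]$. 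It is negative exactly when $\bigl(\alpha(\alpha+2)/(\alpha+1)^2\bigr)^k<\alpha/(\alpha+1)$, and taking $t$ small then finishes the argument.

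\textbf{How the paper gets the same criterion.} It avoids permutation bookkeeping entirely. It realizes the $\alpha$-permanental vector with kernel $A$ as a gamma mixture with shapes $\alpha+W_i$, where $W=(N_1,N_2,N_1+N_2,\dots,N_1+N_2)$ and $(N_1,N_2)$ is negative multinomial. This gives $\per_\alpha(A_U)=\E\prod_{i\in U}(\alpha+W_i)$ exactly. Tilting by $(\alpha+N_1+N_2)^k$ gives $F=\bigl(\E(\alpha+N_1+N_2)^k\bigr)^2\Cov_*(N_1,N_2)$. Since $N_1$ given $N_1+N_2$ is binomial with parameter $\tfrac12$, for small $t$ the sign reduces to that of $\alpha^{k+1}(\alpha+1)(\alpha+2)^k-\alpha^2(\alpha+1)^{2k}$.

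Either way, this computation is the proof, and your proposal does not contain it.
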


\begin{proof}[\textbf{Proof of Theorem~\ref{thm:allalpha}}]
Fix \(k\geq1\) and take the \(n=k+2\) vertices \(1,2,r_1,\ldots,r_k\). Let \(B\) have weight one on the edges \(1\to r_1\), \(2\to r_1\), and \(r_j\to r_{j+1}\), and weight \(t\) on \(r_k\to1\) and \(r_k\to2\), with \(0<t<1/2\). All other entries vanish. The only directed cycles are \(1\to r_1\to\cdots\to r_k\to1\) and \(2\to r_1\to\cdots\to r_k\to2\), and they share \(r_1,\dots,r_k\), so the permutation expansion of the determinant gives, for \(U=\diag(u)\),
\[
 \det(I-BU)=1-t(u_1+u_2)\prod\nolimits_{j=1}^ku_{r_j}.
\]
Taking \(U=sI\) shows that the nonzero eigenvalues of \(B\) are the \((k+1)\)st roots of \(2t\), so \(\rho(B)=(2t)^{1/(k+1)}<1\). The graph of \(B\) is strongly connected, so \(A=(I-B)^{-1}=\sum_{m\ge0}B^m\) is entrywise positive, and it is an inverse \(M\)-matrix.

Let \((N_1,N_2)\) have the negative multinomial law with probability generating function
\[
 \E[v^{N_1}w^{N_2}]
 =\left(\frac{1-2t}{1-tv-tw}\right)^\alpha,
\]
put \(L=N_1+N_2\), and let \(W=(N_1,N_2,L,\ldots,L)\), with coordinates indexed by \(1,2,r_1,\ldots,r_k\). Conditional on \(W\), let \(Y_1,\dots,Y_n\) be independent gamma variables with shape \(\alpha+W_i\) and rate one, so that \(\E[e^{-s_iY_i}\mid W]=q_i^{\alpha+W_i}\) with \(q_i=1/(1+s_i)\). For \(s\in[0,\infty)^n\) put \(Q=\diag(q_1,\dots,q_n)\) and \(P=\prod_{j=1}^kq_{r_j}\). Since \(q^W:=\prod_iq_i^{W_i}=(q_1P)^{N_1}(q_2P)^{N_2}\),
\[
 \E e^{-\sum_is_iY_i}
 =\Bigl(\prod\nolimits_iq_i\Bigr)^{\alpha}\left(\frac{1-2t}{1-t(q_1+q_2)P}\right)^\alpha
 =\left(\frac{\det(I-B)}{\det(I+\Lambda)\det(I-BQ)}\right)^\alpha,
\]
by the formula for \(\det(I-BU)\) with \(U=Q\) and with \(U=I\). Here \(\Lambda=\diag(s)\), and \((I-BQ)(I+\Lambda)=I-B+\Lambda\), so the last denominator is \(\det(I-B+\Lambda)=\det(I-B)\det(I+A\Lambda)\). Hence \(Y\) has Laplace transform \(\det(I+A\Lambda)^{-\alpha}\). The variable \(L\) is negative binomial, so \(W\) and \(Y\) have finite moments of all orders, and~\eqref{eq:vj}, together with the conditional independence of the \(Y_i\) and \(\E[Y_i\mid W]=\alpha+W_i\), gives
\[
 \per_\alpha(A_S)=\E\prod\nolimits_{i\in S}(\alpha+W_i),\qquad S\subseteq[n].
\]

Set \(R=\{r_1,\ldots,r_k\}\), \(S=R\cup\{1\}\), and \(T=R\cup\{2\}\), so that \(S\cup T=[n]\) and \(S\cap T=R\). Tilt the law of \((N_1,N_2)\) by \((\alpha+L)^k\), and denote the tilted expectation by \(\E_*\). Then
\[
 \per_\alpha(A_{S\cup T})\per_\alpha(A_R)-\per_\alpha(A_S)\per_\alpha(A_T)
 =\bigl(\E(\alpha+L)^k\bigr)^2\Cov_*(N_1,N_2).
\]
The generating function of \((N_1,N_2)\) depends on \(v,w\) only through \(v+w\), so given \(L\) the variable \(N_1\) is binomial with parameters \((L,1/2)\), and \(N_2=L-N_1\). This remains so after the tilt, which depends only on \(L\). Hence
\[
 \Cov_*(N_1,N_2)=\tfrac14(\Var_*(L)-\E_*L).
\]

Write \(z=2t\), and let \((\alpha)_m\) denote the rising factorial. The variable \(L\) has \(\mathbb P(L=m)=(1-z)^\alpha(\alpha)_mz^m/m!\), so the tilted law is \(\mathbb P_*(L=m)=a_mz^m/G(z)\), where
\[
 G(z)=\sum_{m\geq0}a_mz^m,\qquad
 a_m=\frac{(\alpha)_m}{m!}(\alpha+m)^k.
\]
The series converges for \(|z|<1\). Since \(\E_*L=zG'(z)/G(z)\) and \(\E_*[L(L-1)]=z^2G''(z)/G(z)\),
\[
 \Var_*(L)-\E_*L=z^2(\log G)''(z).
\]
At zero the numerator of \((\log G)''\) is
\begin{align*}
 G(0)G''(0)-G'(0)^2
 &=\alpha^{k+1}(\alpha+1)(\alpha+2)^k
   -\alpha^2(\alpha+1)^{2k}.
\end{align*}
This quantity is negative exactly when
\[
 \left(\frac{\alpha(\alpha+2)}{(\alpha+1)^2}\right)^k
 <\frac{\alpha}{\alpha+1}.
\]
The base on the left lies strictly between zero and one, so a sufficiently large integer \(k\) satisfies this inequality. Continuity then makes \((\log G)''(2t)<0\) for sufficiently small positive \(t\), and the covariance above is negative.
\end{proof}

\begin{rmk}\label{rmk:family}
For \(\alpha=1\) one may take \(k=3\), because \((3/4)^3<1/2\). With the vertices ordered \(1,2,r_1,r_2,r_3\) and \(t=1/14\), the matrix of Theorem~\ref{thm:counterexample} is \(84\,(I-B)^{-1}\diag(\tfrac17,\tfrac17,\tfrac1{14},\tfrac1{14},\tfrac1{14})\); a positive column scaling multiplies \(\per(A_S)\) by a modular factor and so does not affect~\eqref{eq:lsm}. For \(k=2\) the sign at zero is that of \(\alpha^2+\alpha-1\), so the four-vertex members of the family violate log-supermodularity for small \(t\) exactly when \(\alpha<(\sqrt5-1)/2\).
\end{rmk}

\end{document}